\newdefinition{remark}{Remark}
\newdefinition{example}{Example}
\newtheorem{proposition}{Proposition}
\newtheorem{lemma}{Lemma}
\newtheorem{theorem}{Theorem}
\newtheorem{corollary}{Corollary}
\newcolumntype{C}{>{\centering\arraybackslash}X}
\newcolumntype{P}[1]{>{\centering\arraybackslash}p{#1}}
\def\ps@pprintTitle{%
   \let\@oddhead\@empty
   \let\@evenhead\@empty
   \let\@oddfoot\@empty
   \let\@evenfoot\@oddfoot
}
\begin{document}

\begin{frontmatter}

\title{Notes on the lattice of fuzzy rough sets with crisp reference sets\tnoteref{fr_title}}

\author[1]{Dávid Gégény\corref{cor_gd}\fnref{fund}}
\ead{matgd@uni-miskolc.hu}
\ead[url]{http://www.uni-miskolc.hu/~matgd/}

\author[2]{László Kovács\fnref{fund}}
\ead{kovacs@iit.uni-miskolc.hu}
\ead[url]{https://www.iit.uni-miskolc.hu/munkatarsak/kovacs-laszlo.html}

\author[1]{Sándor Radeleczki\fnref{fund}}
\ead{matradi@uni-miskolc.hu}
\ead[url]{http://www.uni-miskolc.hu/~matradi/}

\address[1]{Institute of Mathematics, University of Miskolc, 3515 Miskolc-Egyetemváros, Hungary}

\address[2]{Department of Information Technology, University of Miskolc, 3515 Miskolc-Egyetemváros, Hungary}

\cortext[cor_gd]{Corresponding author}

\fntext[fund]{This work was carried out as part of the grant EFOP-3.6.1-16-00011: "Younger and Renewing University - Innovative Knowledge City - intelligent specialization", in the framework of the Széchenyi 2020 program. The realization of this project is supported by the European Union, co-financed by the European Social Fund.}

\begin{abstract}
Since the theory of rough sets was introduced by Zdzislaw Pawlak, several approaches have been proposed to combine rough set theory with fuzzy set theory. In this paper, we examine one of these approaches, namely fuzzy rough sets with crisp reference sets, from a lattice-theoretic point of view. We connect the lower and upper approximations of a fuzzy relation $R$ to the approximations of the core and support of $R$. We also show that the lattice of fuzzy rough sets corresponding to a fuzzy equivalence relation $R$ and the crisp subsets of its universe is isomorphic to the lattice of rough sets for the (crisp) equivalence relation $E$, where $E$ is the core of $R$. We establish a connection between the exact (fuzzy) sets of $R$ and the exact (crisp) sets of the support of $R$.

\end{abstract}






\begin{keyword}

fuzzy rough set \sep lower and upper approximation \sep fuzzy equivalence \sep uncertain knowledge \sep regular double Stone lattice \sep dually well-ordered set 

\MSC[2010] 94D05 \sep 68T37 \sep 06B15

\end{keyword}

\end{frontmatter}

\section{Introduction}
\label{s_intro}

The notion of fuzzy sets and rough sets both extend the concept of traditional (crisp) sets by incorporating that our knowledge may be uncertain or incomplete. However, these approaches address the problem of imperfect information in a different way.

Rough sets were introduced by Zdzislaw Pawlak \cite{Pawlak1}, and they use the lower and upper approximations of a (crisp) set based on the indiscernibility relation of the elements. Given a reference set $A$ in a universe $U$ and an equivalence relation $R \subseteq U \times U$, the \emph{lower approximation} of the set $A$ is
\[ A_R = \{ x \in U\ |\ [x]_R \subseteq A \} \]

\noindent and the \emph{upper approximation} of $A$ is
\[ A^R = \{ x \in U\ |\ [x]_R \cap A \neq \emptyset \} ,\]

\noindent where $[x]_R$ is the $R$-equivalence class of an element $x$. The pair $ (A_R, A^R) $ is called the \emph{rough set} corresponding to the reference set $A$ and $(U, R)$ is called an \emph{approximation space}. The rough sets corresponding to this approximation space $(U, R)$ can be ordered with respect to the component-wise inclusion, and they form a complete lattice with several particular properties, denoted by RS$(U, R)$, see e.g. \cite{Po88}, \cite{Com} and \cite{D}.

\medskip

The theory of fuzzy sets was introduced by Lotfi Zadeh \cite{Zadeh1}.  A fuzzy set $A$ is defined by a membership function $\mu_A: U \longrightarrow [0, 1]$. The membership degree 0 means that the element is certainly not a member of the set $A$, and the membership degree 1 means that the element is certainly in the set.

One of the pioneer works to analyze the relationship between the two main theories can be found in \cite{Pawlak2}, where the author had shown that there are significant differences between these concepts. The first approach to integrate the two main theories relates to the work of Dubois and Prade \cite{DP}. The proposed lower and upper approximations for fuzzy sets are defined using the t-norm Min and its dual co-norm Max. Using the symbolic notation introduced by Yao in \cite{Yao1}, the \textit{fuzzy rough set} of a fuzzy set $\Gamma$ is defined with
\[
\mu_{\underline{apr}_\mathcal{R}(\Gamma)}(x) = \text{inf} \{ \text{max} [ \mu_{\Gamma}(y), 1 - \mu_{\mathcal{R}}(x, y)] \mid y \in U \},
\]
\[
\mu_{\overline{apr}_\mathcal{R}(\Gamma)}(x) = \text{sup} \{ \text{min} [ \mu_{\Gamma}(y), \mu_{\mathcal{R}}(x, y)] \mid y \in U \},
\]

\noindent where $U$ denotes the universe set and $\mathcal{R}$ is the symbol for a fuzzy similarity relation. As the definition shows \textit{fuzzy rough sets} are rough sets having fuzzy sets as lower and upper approximations attached to a fuzzy reference set. As crisp sets are special cases of fuzzy sets (having two-valued membership functions), the given definition can also be used to construct fuzzy rough sets for crisp sets. A comparison of the two approaches can also be found in \cite{JKR}.

Beside some other generalization approaches like Nanda and Majumdar \cite{Nanda}, we can also find some different proposals for integration. The work in \cite{Coker} had shown that fuzzy rough sets are, indeed, intuitionistic L-fuzzy sets developed by Atanassov \cite{Atanassov}. More general framework can be obtained under fuzzy environment based on fuzzy similarity relations defined by t-norms, see e.g. \cite{BJR} or \cite{MY}. In \cite{BJR}, the upper and lower approximations of a fuzzy subset with respect to an indistinguishability operator are studied, and their relations with fuzzy rough sets are pointed out. In \cite{MY}, an axiomatic approach is developed; using fuzzy similarity relations defined by a t-norm, the definition of the upper and lower approximation operator in case of fuzzy rough sets is generalized based on some axiomatic properties.

\medskip

The integration proposal of Yao \cite{Yao1} is based on the consideration that a fuzzy set can be represented by a family of crisp sets using its $\alpha$-level sets, whereas a rough set can be represented by three crisp sets. Yao has analyzed the relationship between the rough fuzzy set and fuzzy rough set models and proved that rough fuzzy sets are special cases of fuzzy rough sets as defined by Dubois and Prade. Another conclusion of \cite{Yao1} is that the membership functions of rough sets, rough fuzzy sets, and fuzzy rough sets can be computed uniformly using the same scheme:

\[
    \mu_{\underline{apr}_{\Gamma}( \Delta )}(x) := \text{inf} \left\{ \text{max} \left[ \mu_{\Delta}(y), 1-\mu_{\Gamma}(x, y) \right] \mid y\in U  \right\},
\]

\[
    \mu_{\overline{apr}_{\Gamma}( \Delta )}(x) := \text{sup} \left\{ \text{min} \left[ \mu_{\Delta}(y), \mu_{\Gamma}(x, y) \right] \mid y\in U  \right\},
\]

\medskip

\noindent where $\Gamma$ is a variable that takes either an equivalence relation or a fuzzy similarity relation as its value, and $\Delta$ is a variable that takes either a crisp set or a fuzzy set as its value. The properties of the general case that uses fuzzy reference sets in a fuzzy approximation space defined by a t-norm are also examined in \cite{Cornelis1}, where an application in query refinement is also presented.

\medskip

The main application area of the fuzzy rough set theory relates to optimisation of knowledge engineering algorithms. Regarding the data preprocessing phase, the fuzzy rough set models are used mainly for attribute reduction \cite{DJQX}, \cite{KLI}. The main benefit of this approach is that fuzzy-rough feature extraction preserves the meaning, the semantics of the selected features after elimination of the redundant attributes. The FRFS method works with discovering dependencies between the elements of the attribute set. The fuzzy rough set model can also be used for general data mining operations, like clustering or classification in the case of uncertain input domains \cite{YHL}.

 The main focus of this paper is on fuzzy rough sets, using crisp sets as reference sets in a fuzzy approximation space. Fuzzy rough sets with crisp reference sets are important modelling tools in machine learning applications, like in natural language processing, where the reference sets contain crisp valued feature vectors and we construct fuzzy concept categories corresponding to them (see e.g. \cite{Yangsheng} and \cite{Gupta}). Our aim is to examine the lattice-theoretical properties of fuzzy rough sets and to draw a comparison study to traditional rough sets. We show that in case of crisp reference sets, the lattice of fuzzy rough sets corresponding to a fuzzy equivalence relation $R$ is isomorphic to the lattice of rough sets for the (crisp) equivalence relation $E$, where $E$ is the core of $R$, and this is a much investigated structure in the literature.

\medskip

Let $(U, R)$ be a fuzzy approximation space, where $U$ is the universe and $R$ is a fuzzy equivalence relation defined by a mapping $\mu_R:U^2 \longrightarrow [0, 1]$. A fuzzy equivalence relation is a reflexive, symmetric and transitive fuzzy relation. As we are considering fuzzy relations, \emph{reflexive property} means that $ \mu_R(x, x)=1 $ for every $x \in U$ and \emph{symmetry} means that $ \mu_R(x, y) = \mu_R(y, x) $ for every $x,y \in U$. Initially, a fuzzy relation $R$  was called \emph{transitive} if min$(\mu_R(x, y), \mu_R(y, z)) \leq \mu_R(x, z)$, for all $ x, y, z \in U $ \cite{Z71}. Later this notion was
generalized by using the
notion of a t-norm (see e.g. \cite{V85}). \emph{A triangular norm} ${}\mathcal{T}$ (\emph{%
t-norm} for short) is an increasing commutative and associative mapping $\mathcal{T\colon }[0,1]^{2}\longrightarrow \lbrack 0,1]$
satisfying $\mathcal{T}(1,x)=\mathcal{T}(x,1)=x$, for all $x\in \lbrack 0,1]$%
. The t-norm $\mathcal{T}$\emph{\ is }called\emph{\ positive }(see e.g\emph{%
. }\cite{BA08})\emph{, }if\emph{\ }$\mathcal{T}(x,y)>0$, whenever $x,y>0$. We say
that a fuzzy relation $\mu _{R}\colon U^{2}\longrightarrow \lbrack 0,1]$ is $%
\mathcal{T}$\emph{-transitive}, if

\[
\mathcal{T}(\mu _{R}(x,y),\mu _{R}(y,z))\leq \mu _{R}(x,z)\text{, for all }%
x,y,z\in U.
\]

\noindent A reflexive, symmetric and $\mathcal{T}$-transitive\emph{\ }fuzzy
relation\emph{\ }$R$ is called a $\mathcal{T}$\emph{-equivalence,} or a \emph{fuzzy
}$\mathcal{T}$\emph{-similarity} \emph{relation. }It is well-known that $%
\mathcal{T}(x,y)=\ $min$(x,y)$, $x,y\in \lbrack 0,1]$ is a positive t-norm
corresponding to the previous notion of transitivity.

Now, let $A \subseteq U $ be a crisp set. A \emph{fuzzy rough set with reference set $A$} is defined as a pair of two fuzzy sets corresponding to $A$ \cite{Yao1}. The lower approximation of $A$ is given by the membership function 
\[ \mu_{[A]_R} (x)=\text{inf}\{ 1 - \mu_R(x, y)\ \mid \ y \notin A\},\]
\noindent and the upper approximation of $A$ is given by the membership function
\[ \mu_{[A]^R} (x)=\text{sup}\{ \mu_R(x, y)\ \mid \ y \in A\}. \]
\noindent It is easy to check that $\mu_{\lbrack\emptyset]_{R}}=\mu_{\lbrack
\emptyset]^{R}}=\mathbf{0}$, where $\mathbf{0}$ denotes the constant $0$
mapping on $U$, and $\mu_{\lbrack U]_{R}}=\mu_{\lbrack U]^{R}}=\mathbf{1}$,
where $\mathbf{1}$ stands for the constant $1$ mapping on $U$. (Notice that
sup$\ \emptyset=0$, inf$\ \emptyset=1$, and $\mu_{R}(x,x)=1$, for any $x\in U$.) The fuzzy rough set corresponding to the crisp set $A$ is the pair $(\mu_{[A]_R}, \mu_{[A]^R})$. We know that the set of all rough sets in approximation space $(U, R)$ form a lattice with several interesting properties (see e.g. \cite{Po88}, \cite{Com}, \cite{D}, \cite{GW}). The goal of this paper is to examine the algebraic structure of fuzzy rough sets for such favorable properties and to draw a comparison to the case of traditional rough sets.

\section{Preliminary observations}
\label{preliminary}

Let $R$ be a fuzzy relation with a map $\mu_R:U^2 \longrightarrow [0, 1]$. The set $S_{R}:=\{\mu_{R}(x,y)\mid x,y\in U\} \subseteq [0,1] $ is called the \emph{spectrum}
of $R$. We say that a fuzzy relation $R$ has a \emph{dually well-ordered
spectrum}, if any nonempty subset of $S_{R}$ has a maximal element. This is
equivalent to the fact that for any $x\in U$ and any crisp set $B\subseteq U$,
$B\neq\emptyset$ there exists at an element $m_{x}\in B$ such that
\[
\text{sup}\{\mu_{R}(x,y)\mid y\in B\}=\text{max}\{\mu_{R}(x,y)\mid y\in
B\}=\mu_{R}(x,m_{x})\text{.}%
\]

\noindent Observe that this is the case when the spectrum $S_{R}$ of $R$ is a
finite set. If $R$ has a dually well-ordered spectrum, then for any
crisp set $A\subseteq U$, $A\neq\emptyset$%
\[
\mu_{\lbrack A]_{R}}(x)=1-\text{max}\{\mu_{R}(x,y)\mid y\notin A\}\text{,}%
\]%
\[
\mu_{\lbrack A]^{R}}(x)=\text{max}\{\mu_{R}(x,y)\mid y\in A\}\text{.}%
\]

\noindent A similar approach in case of finite (crisp) base sets can be found in \cite{WC}, for decision attributes of decision tables in order to introduce distance measures on fuzzy rough sets. As we pointed out previously the fuzzy rough set corresponding to a
crisp set $A$ is a pair of mappings $(\mu_{\lbrack A]_{R}}$,$\mu_{\lbrack
A]^{R}})$. Let us denote the collection of these pairs by $\mathcal{RS}(U,R)$,
i.e. let
\[
\mathcal{RS}(U,R):=\left\{  \left(  \mu_{\lbrack A]_{R}},\mu_{\lbrack A]^{R}%
}\right)  \mid A\subseteq U\right\}  \text{.}%
\]

\noindent The elements of $\mathcal{RS}(U,R)$ can be ordered by the component-wise
order as follows:

\begin{center}
$\left(  \mu_{\lbrack A]_{R}},\mu_{\lbrack A]^{R}}\right)  \leq\left(
\mu_{\lbrack B]_{R}},\mu_{\lbrack B]^{R}}\right)  \Leftrightarrow$

$\Leftrightarrow\mu_{\lbrack A]_{R}}(x)\leq\mu_{\lbrack B]_{R}}(x)$ and
$\mu_{\lbrack A]^{R}}(x)\leq\mu_{\lbrack B]^{R}}(x)$, for all $x\in U$,
\end{center}

\noindent obtaining a poset $\left(  \mathcal{RS}(U,R),\leq\right)  $ with
least element $(\mathbf{0},\mathbf{0})$ and greatest element $(\mathbf{1}%
,\mathbf{1})$. In other words, this order is a particular case of the product lattice order. We will prove that for any fuzzy equivalence relation $R$ with
a dually well-ordered spectrum, this poset is a complete lattice.

For any number $\alpha\in\lbrack0,1]$, the crisp relation 

\[
R_{\alpha}:=\{(x,y)\in U^{2}\mid\mu_{R}(x,y)\geq\alpha\} 
\]

\noindent is called an $\alpha
$\emph{-section} ($\alpha$\emph{-level}) of the fuzzy relation $R$. If $R$ is
a fuzzy equivalence, then $R_{\alpha}$ is a crisp
equivalence for any  $\alpha\in\lbrack0,1]$. Denote by $E$ the crisp
equivalence $R_{1}$, i.e. let $E:=\{(x,y)\in U^{2}\mid\mu_{R}(x,y)=1\}$. The
$E$-equivalence class of an element $x\in U$ will be denoted by $[x]_{E}$.
Hence%
\[
\lbrack x]_{E}=\{y\in U\mid(x,y)\in E\}=\{y\in U\mid\mu_{R}(x,y)=1\}\text{.}%
\]

\noindent The following lemma is well-known in the literature, see e.g. \cite{Recasens1}:

\begin{lemma}
\label{lemma:lemma1}
For any $y\in\lbrack x]_{E}$ and $z\in U$ we have $\mu_{R}(z,x)=\mu_{R}(z,y)$.
\end{lemma}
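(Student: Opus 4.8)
The plan is to derive the equality $\mu_{R}(z,x)=\mu_{R}(z,y)$ directly from $\mathcal{T}$-transitivity, using the defining property of the class $[x]_{E}$ together with the boundary condition $\mathcal{T}(a,1)=a$ of the t-norm. First I would record the translation $y\in\lbrack x]_{E}\Leftrightarrow\mu_{R}(x,y)=1$, and note that by symmetry this also gives $\mu_{R}(y,x)=1$.

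For the inequality $\mu_{R}(z,y)\leq\mu_{R}(z,x)$, I would apply $\mathcal{T}$-transitivity to the triple $(z,y,x)$, obtaining $\mathcal{T}(\mu_{R}(z,y),\mu_{R}(y,x))\leq\mu_{R}(z,x)$; since $\mu_{R}(y,x)=1$ and $\mathcal{T}(a,1)=a$ for all $a\in\lbrack 0,1]$, the left-hand side is exactly $\mu_{R}(z,y)$, which yields the claim. Symmetrically, applying $\mathcal{T}$-transitivity to the triple $(z,x,y)$ gives $\mathcal{T}(\mu_{R}(z,x),\mu_{R}(x,y))\leq\mu_{R}(z,y)$, and $\mu_{R}(x,y)=1$ collapses this to $\mu_{R}(z,x)\leq\mu_{R}(z,y)$. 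Combining the two inequalities gives the asserted equality.

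There is essentially no real obstacle; the only points that require care are bookkeeping ones. One must invoke the symmetry of $\mu_{R}$ to turn $\mu_{R}(x,y)=1$ into $\mu_{R}(y,x)=1$ so that the fixed argument order in the stated form of $\mathcal{T}$-transitivity can be used, and one must appeal to the neutral-element property $\mathcal{T}(a,1)=\mathcal{T}(1,a)=a$ rather than to positivity or monotonicity of $\mathcal{T}$. Finally, I would remark that the argument is uniform in $\mathcal{T}$, so it applies in particular to the classical $\min$-transitive case considered in the paper.
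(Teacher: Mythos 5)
Your proof is correct: the two applications of $\mathcal{T}$-transitivity to the triples $(z,y,x)$ and $(z,x,y)$, combined with symmetry and the neutral-element property $\mathcal{T}(a,1)=a$, give both inequalities and hence the equality. The paper itself gives no proof of this lemma (it only cites it as well-known from the literature on indistinguishability operators), and your argument is exactly the standard one that such a reference would supply, so there is nothing to add.
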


Now, let $S$ be the support of the fuzzy equivalence relation $R$ with membership function $\mu_R$, i.e. let

\[
S = \{(x, y)\in U^2 \mid \mu_R (x, y) > 0\},
\]

\noindent and define $S(z)=\{y\in U\mid (z,y)\in S\}$, where $U$ is the universe of $R$ and $z\in U$ is an arbitrary
element. Obviously, the binary relation $S$ is reflexive and
symmetric and $S(z)=\{y\in U\mid \mu _{R}(z,y)>0\}\neq \emptyset $, for
any $z\in U$. \\
Next, assume that $\mathcal{T}$\emph{\ }is a positive t-norm and let  $(x, y) \in S$ and $(y, z) \in S$ for some $x, y, z \in U$. This means that $\mu_R(x, y)>0$ and $\mu_R(y, z)>0$. If $R$ is a $\mathcal{T}$-equivalence we obtain: 
$\mu _{R}(x,z)\geq \mathcal{T}(\mu _{R}(x,y),\mu _{R}(y,z))>0$. Therefore, $\mu_R(x, z) > 0$, from which it follows $(x, z) \in S$, meaning that $S$ is an equivalence relation as well. As before, the $S$-equivalence class of an element $x$ will be denoted by $[x]_S$, and clearly $S(x)=[x]_{S}$.

Using the above defined crisp relations $E\subseteq U\times U$ and $S\subseteq U\times U$, we can
assign (crisp) rough sets to any reference set $A\subseteq U$, by defining
its lower and upper approximation with respect to $E$ or $S$:
\[
A_{E}=\{x\in U\mid\lbrack x]_{E}\subseteq A\},\ A^{E}=\{x\in U\mid\lbrack
x]_{E}\cap A\neq\emptyset\},
\]
\[
A_{S}=\{x\in U\mid\ S(x)\subseteq A\},\ A^{S}=\{x\in U\mid\ S(x)\cap A\neq\emptyset\}.
\]

\begin{lemma}
\label{lemma:lemma2}
For any subset $A\subseteq U$ we have
\begin{enumerate}
\item[(i)] $A^{E}=\{x\in U\mid\mu_{\lbrack A]^{R}}(x)=1\}$,

\item[(ii)] $A_{E}=\{x\in U\mid\mu_{\lbrack A]_{R}}(x)>0\}$,

\item[(iii)] $A^{S}=\{x\in U\mid\mu_{\lbrack A]^{R}}(x)>0\}$,

\item[(iv)] $A_{S}=\{x\in U\mid\mu_{\lbrack A]_{R}}(x)=1\}$.
\end{enumerate}

\noindent In other words, assertions (i) and (ii) in Lemma \ref{lemma:lemma2} mean that $A^{E}$ is equal to the core of
the fuzzy set corresponding to $\mu_{\lbrack A]^{R}}$, whereas $A_{E}$ is equal to
the support of the fuzzy set corresponding to $\mu_{\lbrack A]_{R}}$. Similarly, (iii) and (iv) mean that $A^S$ is equal to the support of the fuzzy set corresponding to $\mu_{\lbrack A]^{R}}$, whereas $A_S$ is equal to the core of the fuzzy set corresponding to $\mu_{\lbrack A]_{R}}$.
\end{lemma}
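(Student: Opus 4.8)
\emph{Proof sketch.} The plan is to verify the four set-equalities one at a time, in each case unwinding the definition of the relevant crisp approximation ($A^{E},A_{E},A^{S},A_{S}$) and comparing it with the value of the corresponding fuzzy membership function $\mu_{[A]^{R}}$ or $\mu_{[A]_{R}}$. First I would dispose of the boundary cases: if $A=\emptyset$ then $A^{E}=A_{E}=A^{S}=A_{S}=\emptyset$ (since $x\in[x]_{E}$ and $x\in S(x)$ for every $x$), while $\mu_{[\emptyset]^{R}}=\mu_{[\emptyset]_{R}}=\mathbf 0$, so all four identities hold; dually, if $A=U$ then all four left-hand sides equal $U$ and $\mu_{[U]^{R}}=\mu_{[U]_{R}}=\mathbf 1$. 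So from now on I may assume $\emptyset\neq A\neq U$, and hence both $A$ and $U\setminus A$ are nonempty.

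For (i) and (ii) I would use the max-representations recorded in the preliminary section, which are available because $R$ has a dually well-ordered spectrum: $\mu_{[A]^{R}}(x)=\max\{\mu_{R}(x,y)\mid y\in A\}$ and $\mu_{[A]_{R}}(x)=1-\max\{\mu_{R}(x,y)\mid y\notin A\}$. Then (i) is the chain $\mu_{[A]^{R}}(x)=1\iff\mu_{R}(x,y)=1$ for some $y\in A\iff[x]_{E}\cap A\neq\emptyset\iff x\in A^{E}$, the middle equivalence being just the description $[x]_{E}=\{y\mid\mu_{R}(x,y)=1\}$. Similarly, (ii) is $\mu_{[A]_{R}}(x)>0\iff\max\{\mu_{R}(x,y)\mid y\notin A\}<1\iff\mu_{R}(x,y)\neq 1$ for every $y\notin A\iff$ every member of $[x]_{E}$ belongs to $A$, i.e. $[x]_{E}\subseteq A$, i.e. $x\in A_{E}$.

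For (iii) and (iv) no passage to a maximum is needed; the plain definitions suffice. Since a supremum of a nonempty family of nonnegative reals is positive exactly when some member is positive, $\mu_{[A]^{R}}(x)=\sup\{\mu_{R}(x,y)\mid y\in A\}>0$ holds iff $\mu_{R}(x,y)>0$ for some $y\in A$, i.e. iff $S(x)\cap A\neq\emptyset$, i.e. iff $x\in A^{S}$; this gives (iii). For (iv), every term $1-\mu_{R}(x,y)$ lies in $[0,1]$, so $\mu_{[A]_{R}}(x)=\inf\{1-\mu_{R}(x,y)\mid y\notin A\}$ equals $1$ iff $1-\mu_{R}(x,y)=1$, i.e. $\mu_{R}(x,y)=0$, for every $y\notin A$; this says that no element outside $A$ belongs to $S(x)=\{y\mid\mu_{R}(x,y)>0\}$, i.e. $S(x)\subseteq A$, i.e. $x\in A_{S}$.

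The one genuinely delicate point is the use of the dually well-ordered spectrum in (i) and (ii): the implications ``$\sup=1\Rightarrow$ the value $1$ is attained'' (needed for $\mu_{[A]^{R}}(x)=1\Rightarrow x\in A^{E}$) and ``$\inf>0\Rightarrow$ the infimum is a minimum'' (needed for $\mu_{[A]_{R}}(x)>0\Rightarrow x\in A_{E}$) can fail for a fuzzy equivalence whose spectrum is not dually well-ordered, so that hypothesis is essential there; by contrast, (iii) and (iv) go through for an arbitrary fuzzy equivalence relation $R$. I expect the finished proof to be quite short once these observations are lined up.
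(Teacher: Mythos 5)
Your proof is correct and follows essentially the same route as the paper's: parts (i) and (ii) via the max-representations supplied by the dually well-ordered spectrum, parts (iii) and (iv) straight from the sup/inf definitions, with your explicit handling of $A=\emptyset$ and $A=U$ being in fact slightly more careful than the paper's. One slip in your closing remark, though: in (ii) the dually well-ordered spectrum is needed for the implication $x\in A_{E}\Rightarrow\mu_{[A]_{R}}(x)>0$ (having every term $1-\mu_{R}(x,y)$ positive does not by itself force a positive infimum; one needs the maximum of $\{\mu_{R}(x,y)\mid y\notin A\}$ to be attained), whereas the converse $\mu_{[A]_{R}}(x)>0\Rightarrow x\in A_{E}$ holds for an arbitrary fuzzy relation, since every term dominates the infimum — exactly the split the paper's own proof of (ii) makes.
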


\begin{proof} (i) If $x\in A^{E}$, then there is a $y\in A$
with $(x,y)\in E$, i.e. $\mu_{R}(x,y)=1$. Hence $\mu_{\lbrack A]^{R}}%
(x)=\ $sup$\{\mu_{R}(x,y)\mid y\in A\}=1$. Conversely, suppose that
$\mu_{\lbrack A]^{R}}(x)=1$ for some $x\in U$. Since $R$ has a dually
well-ordered spectrum, this means that max$\{\mu_{R}(x,y)\mid y\in A\}=1$,
i.e. there exists a $y_{x}\in A$, with $\mu_{R}(x,y_{x})=1$. Then
$(x,y_{x})\in E$, whence $[x]_{E}\cap A\neq\emptyset$. This yields $x\in
A^{E}$.

\smallskip

\noindent(ii) If $x\in A_{E}$, then $[x]_{E}\subseteq A$. This means that
there is no $y\notin A$ with $(x,y)\in E$, i.e. such that $\mu_{R}%
(x,y)=1$. Since $R$ has a dually well-ordered spectrum, the set $\{\mu
_{R}(x,y)\mid y\notin A\}$ has (at least one) maximal element $\mu_{R}%
(x,y_{m})$, where $y_{m}\notin A$. Then $\mu_{R}(x,y_{m})<1$, and we obtain
$\mu_{\lbrack A]_{R}}(x)=1-\ $max$\{\mu_{R}(x,y)\mid y\notin A\}=1-\mu
_{R}(x,y_{m})>0$. Conversely, assume that $\mu_{\lbrack A]_{R}}(x)>0$, for
some $x\in U$. Then for any $y\notin A$ we get \\ $1-\mu_{R}(x,y)\geq$\ inf$\{1-\mu_{R}(x,y)\mid y\notin A\}=$ $\mu_{\lbrack
A]_{R}}(x)>0$. This implies $\mu_{R}(x,y)<1$, for each $y\notin A$. Hence
there is no $y\notin A$ with $\mu_{R}(x,y)=1$, i.e. with $(x,y)\in E$.
This yields $[x]_{E}\subseteq A$. Hence $x\in A_{E}$, and this proves (ii).

\smallskip

\noindent(iii) Assume that $x \in A^S$. This can only be if there exists a $y_m \in A$ with $\mu_R(x, y_m) > 0$. Then $\mu_{[A]^R}(x) = \text{sup}\{ \mu_R(x, y) \mid y \in A \} \ge \mu_R(x, y_m) > 0$, yielding that $x$ is in the support of $\mu_{[A]^R}$. Conversely, assume that $x$ is in the support of $\mu_{[A]^R}$, meaning that $\mu_{[A]^R}(x) = \text{sup}\{ \mu_R(x, y) \mid y \in A \} > 0$. This can only happen if there exists $y_m \in A$ with $\mu_R(x, y_m) > 0$, implying $x \in A^S$.

\smallskip

\noindent(iv) Let $x \in A_S$, i.e. $ S(x) \subseteq A$. Then, by the definition of $S$, for every $y \notin A, \mu_R(x, y) = 0$. Thus, $\mu_{[A]_R}(x) = \text{inf}\{ 1 - \mu_R(x, y) \mid y \notin A \} = 1 - 0 = 1$, i.e. $x$ is in the core of $\mu_{[A]_R}(x)$. The reverse implication yielding $ S(x) \subseteq A $ can be easily checked.
\end{proof}

The assertion of the following proposition is implicitly contained in \cite{Cornelis1}. In fact, it is based on the following observation: \\
\noindent Having two fuzzy (or crisp) equivalence relations $E$ and $R$ with $E \subseteq R$, if we calculate the upper approximation of a fuzzy (or crisp) set $\mu$ for $E$ and then the upper approximation of the obtained fuzzy set for $R$, we get the upper approximation for $\mu$ by $R$. Dually for lower approximations (see also \cite{JKR}).

\begin{proposition}
\label{prop:prop1}
Let $R$ be a fuzzy $\mathcal{T}$-equivalence
on $U$ with a dually well-ordered spectrum and $E:=\{(x,y)\in
U^{2}\mid\mu_{R}(x,y)=1\}$. Then for any set $A\subseteq U$ we
have
\[
\mu_{\lbrack A]^{R}}=\mu_{\lbrack A^{E}]^{R}}\text{ and }\mu_{\lbrack
A]_{R}}=\mu_{\lbrack A_{E}]_{R}}\text{\emph{.}}%
\]
\end{proposition}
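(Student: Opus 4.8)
The plan is to prove the two identities separately, in each case establishing the two inequalities for the component-wise order; the only real tool needed is Lemma~\ref{lemma:lemma1}, together with the trivial inclusions $A\subseteq A^{E}$ and $A_{E}\subseteq A$. (The boundary cases $A=\emptyset$ and $A=U$ are immediate, so one may assume $A$ is a proper nonempty subset.)

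For the upper approximation, I would first observe that $A\subseteq A^{E}$: if $x\in A$ then $x\in[x]_{E}\cap A$, so $x\in A^{E}$. Taking the supremum over the larger index set then gives $\mu_{[A]^{R}}(x)\le\mu_{[A^{E}]^{R}}(x)$ for every $x\in U$. For the reverse inequality, fix $x\in U$ and let $y\in A^{E}$ be arbitrary; then $[y]_{E}\cap A\neq\emptyset$, so there is a $z\in A$ with $z\in[y]_{E}$. By Lemma~\ref{lemma:lemma1} (with the element $z$ of the class $[y]_{E}$ in the replaceable position) we get $\mu_{R}(x,y)=\mu_{R}(x,z)\le\sup\{\mu_{R}(x,w)\mid w\in A\}=\mu_{[A]^{R}}(x)$. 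Taking the supremum over all $y\in A^{E}$ yields $\mu_{[A^{E}]^{R}}(x)\le\mu_{[A]^{R}}(x)$, and equality follows.

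For the lower approximation, I would first observe that $A_{E}\subseteq A$: if $x\in A_{E}$ then $x\in[x]_{E}\subseteq A$. Hence $U\setminus A\subseteq U\setminus A_{E}$, and the infimum on the right being over the larger index set gives $\mu_{[A_{E}]_{R}}(x)\le\mu_{[A]_{R}}(x)$. For the reverse inequality, fix $x\in U$ and take any $y\notin A_{E}$; then $[y]_{E}\not\subseteq A$, so there is a $z\in[y]_{E}$ with $z\notin A$. By Lemma~\ref{lemma:lemma1} again, $\mu_{R}(x,y)=\mu_{R}(x,z)$, so $1-\mu_{R}(x,y)=1-\mu_{R}(x,z)\ge\inf\{1-\mu_{R}(x,w)\mid w\notin A\}=\mu_{[A]_{R}}(x)$. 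Taking the infimum over all $y\notin A_{E}$ gives $\mu_{[A_{E}]_{R}}(x)\ge\mu_{[A]_{R}}(x)$, hence the two sides are equal.

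There is no serious obstacle here: the proof is short, and the one point that needs care is invoking Lemma~\ref{lemma:lemma1} with the variables in the right slots — it is the element lying inside an $E$-class, not the point at which we approximate, that may be swapped — plus checking that the argument degenerates correctly when $A\in\{\emptyset,U\}$ (using $\sup\emptyset=0$, $\inf\emptyset=1$). Incidentally, the dually well-ordered spectrum hypothesis is not actually needed for this statement; if one wishes to use it, every $\sup$/$\inf$ above may simply be replaced by $\max$/$\min$ via the formulas in Section~\ref{preliminary}.
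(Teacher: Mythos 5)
Your proof is correct. It is worth noting, though, that the paper itself does not prove Proposition~\ref{prop:prop1} at all: it only remarks that the assertion is ``implicitly contained'' in the cited reference, resting on the composition principle that for two equivalences $E\subseteq R$, approximating first by $E$ and then by $R$ gives the $R$-approximation (and dually for lower approximations), combined with the fact that $A^{E}$ and $A_{E}$ are exactly the $E$-upper and $E$-lower approximations of the crisp set $A$. Your argument replaces that appeal to the literature with a short self-contained verification: the trivial inclusions $A_{E}\subseteq A\subseteq A^{E}$ give one inequality in each identity by monotonicity of $\sup$/$\inf$ in the index set, and Lemma~\ref{lemma:lemma1} gives the other, since any $y\in A^{E}$ (resp.\ $y\notin A_{E}$) is $E$-equivalent to some $z\in A$ (resp.\ $z\notin A$) with $\mu_{R}(x,y)=\mu_{R}(x,z)$. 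The variable bookkeeping in your application of Lemma~\ref{lemma:lemma1} is right, the boundary cases are handled by the conventions $\sup\emptyset=0$, $\inf\emptyset=1$, and your observation that the dually well-ordered spectrum hypothesis is superfluous here is also correct --- the argument never needs the suprema or infima to be attained. This direct route buys a proof that could be inserted verbatim into the paper, at the cost of not exhibiting the more general composition principle that the authors prefer to invoke.
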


\section{Main results}
\label{mainresults}

In what follows, denote as usually by $\left(  \text{RS}(U,E),\leq\right)  $
the lattice of rough sets defined by the equivalence relation $E$.

\begin{theorem}
\label{thm:theorem1}
Let $R$ be a fuzzy $\mathcal{T}$-equivalence on
$U$ with a dually well-ordered spectrum. Then $\left(  \mathcal{RS}%
(U,R),\leq\right)  $ is a complete lattice isomorphic to \break $\left(
\text{RS}(U,E),\leq\right)$.
\end{theorem}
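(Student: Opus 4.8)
The plan is to exhibit an explicit order isomorphism $\Phi\colon\mathcal{RS}(U,R)\to\text{RS}(U,E)$ and then to appeal to the classical fact, recalled in the Introduction, that $\text{RS}(U,E)$ is a complete lattice: any poset that is order-isomorphic to a complete lattice is itself a complete lattice (with all meets and joins transported along the isomorphism), so this yields both assertions of the theorem at once.

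First I would define the map. For a fuzzy rough set $\left(\mu_{[A]_{R}},\mu_{[A]^{R}}\right)\in\mathcal{RS}(U,R)$ put
\[
\Phi\!\left(\mu_{[A]_{R}},\mu_{[A]^{R}}\right):=\bigl(\{x\in U\mid\mu_{[A]_{R}}(x)>0\},\ \{x\in U\mid\mu_{[A]^{R}}(x)=1\}\bigr).
\]
By Lemma~\ref{lemma:lemma2}(i)--(ii) the pair on the right is precisely $(A_{E},A^{E})$, hence it genuinely lies in $\text{RS}(U,E)$; moreover this shows $\Phi$ is well defined, since its value is read off directly from the two membership functions and does not depend on the particular reference set $A$ realizing them. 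Surjectivity is then immediate, because every element of $\text{RS}(U,E)$ is of the form $(A_{E},A^{E})$ for some $A\subseteq U$ and $\Phi\!\left(\mu_{[A]_{R}},\mu_{[A]^{R}}\right)=(A_{E},A^{E})$.

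The next step, injectivity, is where Proposition~\ref{prop:prop1} is used essentially. If $\Phi\!\left(\mu_{[A]_{R}},\mu_{[A]^{R}}\right)=\Phi\!\left(\mu_{[B]_{R}},\mu_{[B]^{R}}\right)$, then $A_{E}=B_{E}$ and $A^{E}=B^{E}$, and Proposition~\ref{prop:prop1} gives $\mu_{[A]_{R}}=\mu_{[A_{E}]_{R}}=\mu_{[B_{E}]_{R}}=\mu_{[B]_{R}}$ and, likewise, $\mu_{[A]^{R}}=\mu_{[A^{E}]^{R}}=\mu_{[B^{E}]^{R}}=\mu_{[B]^{R}}$, so the two fuzzy rough sets coincide; thus $\Phi$ is a bijection.

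It remains to check that $\Phi$ and its inverse preserve the order. In one direction I would use monotonicity of the fuzzy approximation operators in the reference set: from the defining formulas $\mu_{[C]_{R}}(x)=\inf\{1-\mu_{R}(x,y)\mid y\notin C\}$ and $\mu_{[C]^{R}}(x)=\sup\{\mu_{R}(x,y)\mid y\in C\}$ it is immediate that $C\mapsto\mu_{[C]_{R}}$ and $C\mapsto\mu_{[C]^{R}}$ are monotone, so if $A_{E}\subseteq B_{E}$ and $A^{E}\subseteq B^{E}$ then, invoking Proposition~\ref{prop:prop1} once more, $\mu_{[A]_{R}}=\mu_{[A_{E}]_{R}}\leq\mu_{[B_{E}]_{R}}=\mu_{[B]_{R}}$ and $\mu_{[A]^{R}}=\mu_{[A^{E}]^{R}}\leq\mu_{[B^{E}]^{R}}=\mu_{[B]^{R}}$. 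Conversely, if $\mu_{[A]_{R}}\leq\mu_{[B]_{R}}$ and $\mu_{[A]^{R}}\leq\mu_{[B]^{R}}$ pointwise, then $\mu_{[A]_{R}}(x)>0$ forces $\mu_{[B]_{R}}(x)>0$ and $\mu_{[A]^{R}}(x)=1$ forces $\mu_{[B]^{R}}(x)=1$, so $A_{E}\subseteq B_{E}$ and $A^{E}\subseteq B^{E}$. Hence $\Phi$ is an order isomorphism and the theorem follows. I do not expect a genuine obstacle here; the only point that needs care is the bookkeeping around well-definedness and injectivity, where every equality of membership functions must be routed through Lemma~\ref{lemma:lemma2} and Proposition~\ref{prop:prop1} so that no hidden choice of representative set creeps in.
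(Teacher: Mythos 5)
Your proof is correct and is essentially the paper's own argument run in the opposite direction: the paper defines the map from $\text{RS}(U,E)$ to $\mathcal{RS}(U,R)$ via $(A_{E},A^{E})\mapsto(\mu_{[A]_{R}},\mu_{[A]^{R}})$ and uses Proposition~\ref{prop:prop1} for well-definedness and Lemma~\ref{lemma:lemma2} for order-reflection, whereas you define the inverse map and use the same two results for injectivity and well-definedness respectively. The ingredients, their roles, and the final transfer of completeness along the order isomorphism all coincide with the paper's proof.
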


\begin{proof} For each (crisp) rough set $\left(  A_{E}%
,A^{E}\right)  \in\ $RS$(U,E)$ we will assign the fuzzy rough set
corresponding to the crisp set $A$, i.e. the pair $\left(  \mu_{\lbrack
A]_{R}},\mu_{\lbrack A]^{R}}\right)  $. Observe, that the function $f\colon\ $RS$(U,E)\rightarrow\mathcal{RS}(U,R)$,

\medskip

$f\left(  \left(
A_{E},A^{E}\right)  \right)  =\left(  \mu_{\lbrack A]_{R}},\mu_{\lbrack
A]^{R}}\right)  $, where $\left(  A_{E},A^{E}\right)  \in\ $RS$(U,E)$,

\medskip

\noindent is well-defined, because $\left(  A_{E}%
,A^{E}\right)  =\left(  B_{E},B^{E}\right)  $ for some $A,B\subseteq U$
implies $A_{E}=B_{E}$, $A^{E}=B^{E}$, and hence, in view of Proposition \ref{prop:prop1}, we obtain
\[
f\left(  \left(  A_{E},A^{E}\right)  \right)  =\left(  \mu_{\lbrack A]_{R}%
},\mu_{\lbrack A]^{R}}\right)  =\left(  \mu_{\lbrack A_{E}]_{R}},\mu_{\lbrack
A^{E}]^{R}}\right)  =
\]
\[
=\left(  \mu_{\lbrack B_{E}]_{R}},\mu_{\lbrack B^{E}]^{R}}\right)  =\left(
\mu_{\lbrack B]_{R}},\mu_{\lbrack B]^{R}}\right)  =f\left(  \left(
B_{E},B^{E}\right)  \right)  .
\]

\noindent In addition, $f$ is order-preserving because $\left(  A_{E},A^{E}\right)
\leq\left(  B_{E},B^{E}\right)  $ implies $A_{E}\subseteq B_{E}$,
$A^{E}\subseteq B^{E}$, and this yields $\mu_{\lbrack A_{E}]_{R}}\leq
\mu_{\lbrack B_{E}]_{R}}$ and $\mu_{\lbrack A^{E}]^{R}}\leq\mu_{\lbrack
B^{E}]^{R}}$. Thus we obtain:

$f\left(  \left(  A_{E},A^{E}\right)  \right)  =\left(  \mu_{\lbrack
A_{E}]_{R}},\mu_{\lbrack A^{E}]^{R}}\right)  \leq\left(  \mu_{\lbrack
B_{E}]_{R}},\mu_{\lbrack B^{E}]^{R}}\right)  =f\left(  \left(  B_{E}%
,B^{E}\right)  \right)  $.

\noindent Clearly, $f$ is onto, since for any $\left(
\mu_{\lbrack X]_{R}},\mu_{\lbrack X]^{R}}\right)  \in\mathcal{RS}(U,R)$,
$X\subseteq U$ is a crisp set, and hence $f\left(  \left(  X_{E},X^{E}\right)
\right)  =\left(  \mu_{\lbrack X]_{R}},\mu_{\lbrack X]^{R}}\right)  $.
Now, to prove that $f$ is an order-isomorphism, it suffices to show
that $f\left(  \left(  A_{E},A^{E}\right)  \right)  \leq f\left(  \left(
B_{E},B^{E}\right)  \right)  $ implies $\left(  A_{E},A^{E}\right)
\leq\left(  B_{E},B^{E}\right)  $, for any $\left(  A_{E},A^{E}\right)
,\left(  B_{E},B^{E}\right)  \in\ $RS$(U,E)$.

Indeed,  $f\left(  \left(  A_{E},A^{E}\right)  \right)  \leq f\left(  \left(
B_{E},B^{E}\right)  \right)  $ yields that $\left(  \mu_{\lbrack A]_{R}%
}(x),\mu_{\lbrack A]^{R}}(x)\right)  \leq\left(  \mu_{\lbrack B]_{R}}%
(x),\mu_{\lbrack B]^{R}}(x)\right)  $, for all $x\in U$. Hence we get
$\mu_{\lbrack A]_{R}}(x)\leq\mu_{\lbrack B]_{R}}(x)$ and $\mu_{\lbrack A]^{R}%
}(x)\leq\mu_{\lbrack B]^{R}}(x)$, for any $x\in U$. Now, in view of Lemma \ref{lemma:lemma2}, we
obtain:

$A^{E}=\{x\in U\mid\mu_{\lbrack A]^{R}}(x)=1\}\subseteq\{x\in U\mid
\mu_{\lbrack B]^{R}}(x)=1\}=B^{E}$, and

$A_{E}=\{x\in U\mid\mu_{\lbrack A]_{R}}(x)>0\}\subseteq\{x\in U\mid
\mu_{\lbrack B]_{R}}(x)>0\}=B_{E}$.

\noindent Hence $\left(  A_{E},A^{E}\right)  \leq\left(  B_{E},B^{E}\right)
$, and this proves that $f$ is an order-isomorphism. Since $\left(
\text{RS}(U,E),\leq\right)  $ is a complete lattice, we obtain that  $\left(
\mathcal{RS}(U,R),\leq\right)  $ is also a complete lattice isomorphic to
$\left(  \text{RS}(U,E),\leq\right)  $.
\end{proof}

As an immediate consequence, in view of  \cite{Po88} \cite{Com} \cite{GW} we obtain:

\begin{corollary}
\label{cor:corollary1}
If $R$ is a fuzzy equivalence on
the set $U$ with a dually well-ordered spectrum, then $\left(
\mathcal{RS}(U,R),\leq\right)$ is a completely distributive regular
double Stone lattice.
\end{corollary}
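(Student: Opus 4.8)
The plan is to obtain the corollary as an immediate consequence of Theorem~\ref{thm:theorem1} together with the known structure theory of the rough-set lattice $\left(\text{RS}(U,E),\leq\right)$. First I would note that the map $f$ constructed in the proof of Theorem~\ref{thm:theorem1} is a \emph{lattice} isomorphism (an order-isomorphism between complete lattices), so that $\left(\mathcal{RS}(U,R),\leq\right)$ and $\left(\text{RS}(U,E),\leq\right)$ are isomorphic as lattices, where $E=\{(x,y)\in U^{2}\mid\mu_{R}(x,y)=1\}$ is the core of $R$; here one uses that, $R$ being a fuzzy equivalence, $E=R_{1}$ is a genuine crisp equivalence relation on $U$. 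Hence it suffices to show that $\left(\text{RS}(U,E),\leq\right)$ is a completely distributive regular double Stone lattice and that each of these attributes is preserved under lattice isomorphism.

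Second, I would invoke the cited literature \cite{Po88}, \cite{Com}, \cite{GW}: for an arbitrary equivalence relation $E$ on a set $U$, the lattice $\text{RS}(U,E)$ is a regular double Stone lattice (Comer), and it is completely distributive — indeed it is order-isomorphic to a direct product of complete chains, one factor a two-element chain for each singleton $E$-class and one factor a three-element chain for each $E$-class of cardinality at least $2$, and both of these chains are completely distributive regular double Stone lattices. Since complete distributivity, being pseudocomplemented, being dually pseudocomplemented, the Stone identity $a^{*}\vee a^{**}=\mathbf{1}$, its dual $a^{+}\wedge a^{++}=\mathbf{0}$, and regularity all carry over to arbitrary direct products, the conclusion for $\text{RS}(U,E)$ follows.

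Third, I would record the (routine) observation that all the properties in question are invariant under lattice isomorphism: complete distributivity is formulated purely in terms of arbitrary meets and joins, while the pseudocomplement and the dual pseudocomplement of an element, when they exist, are uniquely determined by the lattice order; consequently pseudocomplementedness together with the Stone identity, dual pseudocomplementedness together with the dual Stone identity, and regularity ($a^{*}=b^{*}$ and $a^{+}=b^{+}$ imply $a=b$) are all transported along $f$. Combining these three points gives that $\left(\mathcal{RS}(U,R),\leq\right)$ is a completely distributive regular double Stone lattice.

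I expect no substantial obstacle here: the entire mathematical content sits in Theorem~\ref{thm:theorem1} and in the established theory of $\text{RS}(U,E)$. The one point worth a sentence of care is the possibly infinite case — but Theorem~\ref{thm:theorem1} already yields completeness of $\mathcal{RS}(U,R)$ with no finiteness hypothesis, and the structural description of $\text{RS}(U,E)$ as a product of two- and three-element chains is likewise valid for arbitrary $U$ and $E$, so nothing extra is needed. (One could alternatively verify the Stone and dual Stone identities and regularity directly inside $\mathcal{RS}(U,R)$ by exhibiting the pseudocomplements of a pair $\left(\mu_{[A]_{R}},\mu_{[A]^{R}}\right)$ in closed form, but the route via the isomorphism $f$ is considerably shorter.)
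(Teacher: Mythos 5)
Your proposal is correct and follows essentially the same route as the paper: invoke the isomorphism from Theorem~\ref{thm:theorem1} together with the known fact (from the cited literature) that $\left(\text{RS}(U,E),\leq\right)$ is a completely distributive regular double Stone lattice. The extra detail you supply — the product-of-chains description and the invariance of these properties under lattice isomorphism — is accurate but goes beyond what the paper spells out, which states the conclusion in two sentences.
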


\begin{proof} It is known that the rough set lattice $\left(
\text{RS}(U,E),\leq\right)  $ is completely distributive regular double Stone
lattice. Hence Corollary \ref{cor:corollary1} is obtained by applying the isomorphism established
in Theorem \ref{thm:theorem1}.
\end{proof}

\begin{example}

Let the universe be $U = \{a, b, c, d, e\}$ and the fuzzy equivalence relation $R$ be given by Table \ref{tab:fuzzy}. The corresponding $E=\{(x,y)\in U^{2}\mid\mu_{R}(x,y)=1\}$ relation can be seen on Figure \ref{fig:fuzzyE} (loops are not noted for simplicity). Table \ref{tab:lattice} shows the lower and upper approximations of fuzzy relation $R$ and of the (crisp) equivalence relation $E$. Figure \ref{fig:lattice} shows the Hasse-diagram of the lattice $\left(\mathcal{RS}(U,R),\leq\right)$. Here, the nodes are represented as tables, where the top row represents the membership function of the upper approximation of $R$, and the bottom row represents the membership function of the lower approximation of $R$.

\end{example}

\begin{table}[H]
    \centering
    \begin{tabular}{|P{0.5cm}|P{0.5cm}|P{0.5cm}|P{0.5cm}|P{0.5cm}|P{0.5cm}|}
        \hline
         $R$ & $a$ & $b$ & $c$ & $d$ & $e$  \\ \hline
         $a$ & 1 & 1 & 0.5 & 0 & 0 \\ \hline
         $b$ & 1 & 1 & 0.5 & 0 & 0 \\ \hline
         $c$ & 0.5 & 0.5 & 1 & 0 & 0 \\ \hline
         $d$ & 0 & 0 & 0 & 1 & 1 \\ \hline
         $e$ & 0 & 0 & 0 & 1 & 1 \\ \hline
    \end{tabular}
    \caption{An example fuzzy equivalence relation $R$.}
    \label{tab:fuzzy}
\end{table}

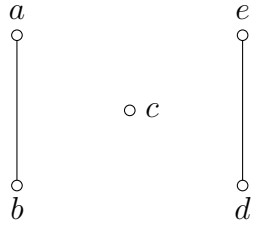
\begin{figure}[H]
    \centering
    \begin{tikzpicture}
    \draw (-1.5, 2.3) node {$a$};
    \draw (-1.5, -0.3) node {$b$};
    \draw (0.3, 1) node {$c$};
    \draw (1.5, -0.3) node {$d$};
    \draw (1.5, 2.3) node {$e$};
    
    \draw (-1.5, 2) -- (-1.5, 0);
    \draw (1.5, 2) -- (1.5, 0);
    
    \draw[fill=white] (-1.5, 0) circle [radius=2pt];
    \draw[fill=white] (1.5, 0) circle [radius=2pt];
    \draw[fill=white] (0, 1) circle [radius=2pt];
    \draw[fill=white] (-1.5, 2) circle [radius=2pt];
    \draw[fill=white] (1.5, 2) circle [radius=2pt];
    \end{tikzpicture}
    \caption{The equivalence relation $E$ corresponding to $R$.}
    \label{fig:fuzzyE}
\end{figure}

\begin{table}[H]
\centering
\resizebox{0.73\textwidth}{!}{%
\begin{scriptsize}
\begin{tabu} to \linewidth {|X[c,m]|X[c,m]|X[c,m]|X[c,m]|X[c,m]|X[c,m]|X[c,m]|X[c,m]|X[c,m]|X[c,m]|X[c,m]|X[c,m]|X[c,m]|}
\hline

\multirow{2}{*}{$A$} & \multirow{2}{*}{$A_E$} & \multirow{2}{*}{$A^E$} & \multicolumn{5}{c|}{$\mu_{[A]_R}(x)$} & \multicolumn{5}{c|}{$\mu_{[A]^R}(x)$} \\ \cline{4-13} 
&&&   a  &  b  &  c  &  d  & e  & a & b & c & d & e \\ \hline
   $\emptyset$    &     $\emptyset$   &    $\emptyset$    & 0 & 0 & 0 & 0 & 0 & 0 & 0 & 0 & 0 & 0  \\
   \hline
a &     $\emptyset$   &        ab         & 0 & 0 & 0 & 0 & 0 &  1  &  1  &  0.5  &  0  & 0 \\
\hline
b &     $\emptyset$   &        ab         & 0 & 0 & 0 & 0 & 0 &  1  &  1  &  0.5  &  0  & 0  \\
\hline
c &         c         &        c          &  0  &  0  & 0.5 &  0  & 0  &  0.5  & 0.5 & 1  & 0  & 0  \\
\hline
d &     $\emptyset$   &        de         &  0 & 0 & 0 & 0 & 0  &  0  & 0 & 0  & 1  & 1  \\ \hline
e &     $\emptyset$   &        de         &  0 & 0 & 0 & 0 & 0  &  0  & 0 & 0  & 1  & 1  \\ \hline
ab &         ab        &        ab         &  0.5  &  0.5  & 0 & 0 & 0  &  1  & 1 & 0.5  & 0  & 0  \\ \hline
ac &         c         &        abc        &  0  &  0  & 0.5 & 0 & 0  &  1  & 1 & 1  & 0  & 0  \\ \hline
ad &    $\emptyset$   &     abde       &  0 & 0 & 0 & 0 & 0  & 1  & 1 & 0.5 & 1 &  1 \\ \hline
ae &    $\emptyset$   &     abde       &   0 & 0 & 0 & 0 & 0 & 1  & 1 & 0.5 & 1 &  1 \\ \hline
bc &         c        &     abc        &  0  &  0  & 0.5 & 0 & 0  &  1  & 1 & 1  & 0  & 0  \\ \hline
bd &    $\emptyset$   &     abde       &   0 & 0 & 0 & 0 & 0 & 1  & 1 & 0.5 & 1 &  1 \\ \hline
be &    $\emptyset$   &     abde       &   0 & 0 & 0 & 0 & 0 & 1  & 1 & 0.5 & 1 &  1 \\ \hline
cd &         c        &     cde        &  0  &  0  & 0.5  & 0 & 0  &  0.5  & 0.5 & 1  & 1  & 1  \\ \hline
ce &         c        &     cde        &  0  &  0  & 0.5  & 0 & 0  &  0.5  & 0.5 & 1  & 1  & 1  \\ \hline
de &        de        &     de         &  0  &  0  & 0    & 1 & 1  &  0  & 0 & 0  & 1  & 1  \\ \hline
abc &       abc       &     abc        &  1  &  1  & 1    & 0 & 0  &  1  & 1 & 1  &  0 &  0 \\ \hline
abd &       ab        &     abde       &  0.5  & 0.5 & 0  & 0 & 0  & 1  & 1 & 0.5 & 1 &  1 \\ \hline
abe &       ab        &     abde       & 0.5   & 0.5 & 0  & 0 & 0  & 1  & 1 & 0.5 & 1 &  1 \\ \hline
acd &       c         &      U         & 0   & 0  & 0.5   & 0 & 0  & 1 & 1 & 1 & 1 & 1   \\ \hline
ace &       c         &      U         & 0   & 0   & 0.5  & 0 & 0  & 1 & 1 & 1 & 1 & 1   \\ \hline
ade &       de        &     abde       & 0   &  0  & 0    & 1 & 1  & 1  & 1 & 0.5 & 1 &  1 \\ \hline
bcd &       c         &       U        & 0   &  0  & 0.5  & 0 & 0  &  1 & 1 & 1 & 1 & 1   \\ \hline
bce &       c         &       U        &  0  &  0  & 0.5  & 0 & 0  &  1 & 1 & 1 & 1 & 1   \\ \hline
bde &       de         &    abde        &  0  &  0  & 0   & 1 & 1  & 1  & 1 & 0.5 & 1 &  1 \\ \hline
cde &       cde        &      cde       &  0  &  0  & 0.5 & 1 & 1  & 0.5  & 0.5 & 1  & 1  & 1  \\ \hline
abcd &      abc         &        U       &  1  &  1  & 1  & 0 & 0  & 1 & 1 & 1 & 1 & 1   \\ \hline
abce &      abc         &        U       &  1  &  1  & 1  & 0 & 0  & 1 & 1 & 1 & 1 & 1   \\ \hline
abde &      abde        &      abde      &  0.5 & 0.5 & 0 & 1 & 1  & 1  & 1 & 0.5 & 1 &  1 \\ \hline
acde &      cde         &      U         &  0  & 0  & 0.5 & 1 & 1  & 1 & 1 & 1 & 1 & 1   \\ \hline
bcde &      cde         &      U         &  0  & 0  & 0.5 & 1 & 1  & 1 & 1 & 1 & 1 & 1   \\ \hline
U &         U        &       U        &    1 & 1 & 1 & 1 & 1  & 1 & 1 & 1 & 1 & 1   \\ \hline

\end{tabu}
\end{scriptsize}
}
\bigskip
\caption{Approximations on $U$ given by the equivalence $E$ and fuzzy relation $R$. }
\label{tab:lattice}
\end{table}

\bigskip

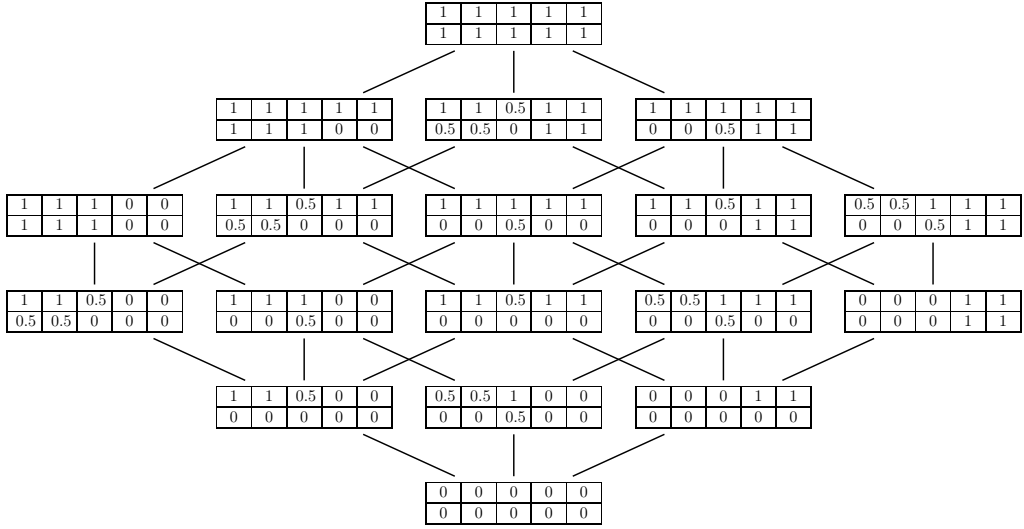
\begin{figure}[H]
    \centering
    \resizebox{\textwidth}{!}{%
    \begin{tikzpicture}[node distance= 5cm,line width=1pt, auto]
    \begin{small}
        \title{}
        \node(min) at (0,0)     {
                \begin{tabularx}{4.2cm}{|C|C|C|C|C|}
                    \hline
                     0 & 0 & 0 & 0 & 0 \\ \hline
                     0 & 0 & 0 & 0 & 0 \\
                     \hline
                \end{tabularx}
        };

        \node(a2)      [above=1cm of min]       {
                \begin{tabularx}{4.2cm}{|C|C|C|C|C|}
                    \hline
                     0.5 & 0.5 & 1 & 0 & 0 \\ \hline
                     0 & 0 & 0.5 & 0 & 0 \\
                     \hline
                \end{tabularx}
        };
        \node(a1)      [left of = a2]       {
                \begin{tabularx}{4.2cm}{|C|C|C|C|C|}
                    \hline
                     1 & 1 & 0.5 & 0 & 0 \\ \hline
                     0 & 0 & 0 & 0 & 0 \\
                     \hline
                \end{tabularx}
        };
        \node(a3)      [right of = a2]       {
                \begin{tabularx}{4.2cm}{|C|C|C|C|C|}
                    \hline
                     0 & 0 & 0 & 1 & 1 \\ \hline
                     0 & 0 & 0 & 0 & 0 \\
                     \hline
                \end{tabularx}
        };
        
        \node(b2)      [above=1cm of a1]       {
                \begin{tabularx}{4.2cm}{|C|C|C|C|C|}
                    \hline
                     1 & 1 & 1 & 0 & 0 \\ \hline
                     0 & 0 & 0.5 & 0 & 0 \\
                     \hline
                \end{tabularx}
        };
        \node(b3)      [above=1cm of a2]       {
                \begin{tabularx}{4.2cm}{|C|C|C|C|C|}
                    \hline
                     1 & 1 & 0.5 & 1 & 1 \\ \hline
                     0 & 0 & 0 & 0 & 0 \\
                     \hline
                \end{tabularx}
        };
        \node(b4)      [above=1cm of a3]       {
                \begin{tabularx}{4.2cm}{|C|C|C|C|C|}
                    \hline
                     0.5 & 0.5 & 1 & 1 & 1 \\ \hline
                     0 & 0 & 0.5 & 0 & 0 \\
                     \hline
                \end{tabularx}
        };
        
        \node(b1)      [left of = b2]       {
                \begin{tabularx}{4.2cm}{|C|C|C|C|C|}
                    \hline
                     1 & 1 & 0.5 & 0 & 0 \\ \hline
                     0.5 & 0.5 & 0 & 0 & 0 \\
                     \hline
                \end{tabularx}
        };
        \node(b5)      [right of = b4]       {
                \begin{tabularx}{4.2cm}{|C|C|C|C|C|}
                    \hline
                     0 & 0 & 0 & 1 & 1 \\ \hline
                     0 & 0 & 0 & 1 & 1 \\
                     \hline
                \end{tabularx}
        };
        
        \node(c1)      [above=1cm of b1]       {
                \begin{tabularx}{4.2cm}{|C|C|C|C|C|}
                    \hline
                     1 & 1 & 1 & 0 & 0 \\ \hline
                     1 & 1 & 1 & 0 & 0 \\
                     \hline
                \end{tabularx}
        };
        \node(c2)      [above=1cm of b2]       {
                \begin{tabularx}{4.2cm}{|C|C|C|C|C|}
                    \hline
                     1 & 1 & 0.5 & 1 & 1 \\ \hline
                     0.5 & 0.5 & 0 & 0 & 0 \\
                     \hline
                \end{tabularx}
        };
        \node(c3)      [above=1cm of b3]       {
                \begin{tabularx}{4.2cm}{|C|C|C|C|C|}
                    \hline
                     1 & 1 & 1 & 1 & 1 \\ \hline
                     0 & 0 & 0.5 & 0 & 0 \\
                     \hline
                \end{tabularx}
        };
        \node(c4)      [above=1cm of b4]       {
                \begin{tabularx}{4.2cm}{|C|C|C|C|C|}
                    \hline
                     1 & 1 & 0.5 & 1 & 1 \\ \hline
                     0 & 0 & 0 & 1 & 1 \\
                     \hline
                \end{tabularx}
        };
        \node(c5)      [above=1cm of b5]       {
                \begin{tabularx}{4.2cm}{|C|C|C|C|C|}
                    \hline
                     0.5 & 0.5 & 1 & 1 & 1 \\ \hline
                     0 & 0 & 0.5 & 1 & 1 \\
                     \hline
                \end{tabularx}
        };
        
        \node(d1)      [above=1cm of c2]       {
                \begin{tabularx}{4.2cm}{|C|C|C|C|C|}
                    \hline
                     1 & 1 & 1 & 1 & 1 \\ \hline
                     1 & 1 & 1 & 0 & 0 \\
                     \hline
                \end{tabularx}
        };
        \node(d2)      [above=1cm of c3]       {
                \begin{tabularx}{4.2cm}{|C|C|C|C|C|}
                    \hline
                     1 & 1 & 0.5 & 1 & 1 \\ \hline
                     0.5 & 0.5 & 0 & 1 & 1 \\
                     \hline
                \end{tabularx}
        };
        \node(d3)      [above=1cm of c4]       {
                \begin{tabularx}{4.2cm}{|C|C|C|C|C|}
                    \hline
                     1 & 1 & 1 & 1 & 1 \\ \hline
                     0 & 0 & 0.5 & 1 & 1 \\
                     \hline
                \end{tabularx}
        };
        
        \node(max)      [above=1cm of d2]       {
                \begin{tabularx}{4.2cm}{|C|C|C|C|C|}
                    \hline
                     1 & 1 & 1 & 1 & 1 \\ \hline
                     1 & 1 & 1 & 1 & 1 \\
                     \hline
                \end{tabularx}
        };
        
        \draw(min) -- (a1);
        \draw(min) -- (a2);
        \draw(min) -- (a3);
        
        \draw(a1) -- (b1);
        \draw(a1) -- (b2);
        \draw(a1) -- (b3);
        
        \draw(a2) -- (b2);
        \draw(a2) -- (b4);
        
        \draw(a3) -- (b3);
        \draw(a3) -- (b4);
        \draw(a3) -- (b5);
        
        \draw(b1) -- (c1);
        \draw(b1) -- (c2);
        
        \draw(b2) -- (c1);
        \draw(b2) -- (c3);
        
        \draw(b3) -- (c2);
        \draw(b3) -- (c3);
        \draw(b3) -- (c4);
        
        \draw(b4) -- (c3);
        \draw(b4) -- (c5);
        
        \draw(b5) -- (c4);
        \draw(b5) -- (c5);
        
        \draw(c1) -- (d1);
        
        \draw(c2) -- (d1);
        \draw(c2) -- (d2);
        
        \draw(c3) -- (d1);
        \draw(c3) -- (d3);
        
        \draw(c4) -- (d2);
        \draw(c4) -- (d3);
        
        \draw(c5) -- (d3);
        
        \draw(d1) -- (max);
        \draw(d2) -- (max);
        \draw(d3) -- (max);
        
    \end{small}
    \end{tikzpicture}
    }
    \caption{Lattice of $\left(\mathcal{RS}(U,R),\leq\right)$ based on Example 1.}
    \label{fig:lattice}
\end{figure}

A rough set is called \emph{exact} if the lower approximation and the upper approximation of the set are equal. This notion can be extended to fuzzy rough sets. A fuzzy rough set defined by the fuzzy equivalence $R$ is \emph{exact} if for every $x \in U$, $\mu_{[A]_R}(x) = \mu_{[A]^R}(x)$ holds, where $U$ is the universe of $R$.

The following proposition describes the relationship between exact fuzzy rough sets and the support of the fuzzy equivalence relation.

\begin{proposition}
\label{prop:prop2}
Let $A$ be a (crisp) subset of $U$. Then
\[ 
 \mu_{[A]_R}(x) = \mu_{[A]^R}(x) \text{ for all } {x \in U} \Leftrightarrow \ A_S = A^S.
\]
\end{proposition}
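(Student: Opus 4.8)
The plan is to reduce the statement to two elementary consequences of reflexivity, combined with parts (iii) and (iv) of Lemma~\ref{lemma:lemma2}. First I would record the following facts, valid for every crisp $A\subseteq U$ and every $x\in U$. Since $\mu_R(x,x)=1$: if $x\in A$ then $\mu_{[A]^R}(x)\ge\mu_R(x,x)=1$, so $\mu_{[A]^R}(x)=1$; and if $x\notin A$ then $\mu_{[A]_R}(x)\le 1-\mu_R(x,x)=0$, so $\mu_{[A]_R}(x)=0$. In particular $\mu_{[A]_R}(x)\le\mu_{[A]^R}(x)$ for all $x$. (Dually, since $S$ is reflexive, $A_S\subseteq A\subseteq A^S$ always, so $A_S=A^S$ amounts to $A_S=A=A^S$, i.e.\ to $A$ being a union of $S$-classes; this is not needed below, but it explains the picture.)

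For the implication $A_S=A^S\Rightarrow \mu_{[A]_R}=\mu_{[A]^R}$, I would use Lemma~\ref{lemma:lemma2}(iv) to write $A_S=\{x\in U\mid\mu_{[A]_R}(x)=1\}$ and Lemma~\ref{lemma:lemma2}(iii) to write $A^S=\{x\in U\mid\mu_{[A]^R}(x)>0\}$, and then argue pointwise. Fix $x\in U$. If $\mu_{[A]_R}(x)=1$, then $1=\mu_{[A]_R}(x)\le\mu_{[A]^R}(x)\le 1$, so $\mu_{[A]^R}(x)=1$ and the two values coincide. If $\mu_{[A]_R}(x)<1$, then $x\notin A_S=A^S$, hence $\mu_{[A]^R}(x)=0$, and then $0\le\mu_{[A]_R}(x)\le\mu_{[A]^R}(x)=0$ forces $\mu_{[A]_R}(x)=0$; again the values coincide.

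For the converse, assume $\mu_{[A]_R}(x)=\mu_{[A]^R}(x)$ for all $x\in U$. By the reflexivity observation above, $\mu_{[A]^R}(x)=1$ for every $x\in A$ and $\mu_{[A]^R}(x)=\mu_{[A]_R}(x)=0$ for every $x\notin A$. Hence $\{x\in U\mid\mu_{[A]^R}(x)>0\}=A=\{x\in U\mid\mu_{[A]_R}(x)=1\}$, which by Lemma~\ref{lemma:lemma2}(iii) and (iv) says exactly that $A^S=A=A_S$; in particular $A_S=A^S$.

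The only genuine obstacle is the converse direction: it is tempting to try to deduce $x\in A^S\Rightarrow x\in A_S$ straight from Lemma~\ref{lemma:lemma2}, but $x\in A^S$ only yields $\mu_{[A]^R}(x)>0$, hence $\mu_{[A]_R}(x)>0$ (so merely $x\in A_E$ by Lemma~\ref{lemma:lemma2}(ii)), which is strictly weaker than the $\mu_{[A]_R}(x)=1$ required for membership in $A_S$. Closing this gap is precisely what the reflexivity fact — that the lower approximation vanishes outside $A$ — accomplishes; it also shows that an exact fuzzy rough set is automatically two-valued, namely the characteristic function of a union of $S$-equivalence classes. Everything else is routine.
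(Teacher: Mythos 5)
Your proof is correct, and it takes a genuinely different route from the paper's. The paper argues directly from the definitions: it first notes that $A_S=A^S$ forces $A_S=A=A^S$ (by reflexivity of $S$), then computes $\mu_{[A]_R}(x)$ and $\mu_{[A]^R}(x)$ from scratch in the two cases $x\in A$ and $x\notin A$, and in the converse extracts $\mu_R(x,y)=0$ for all $y\notin A$ from the condition $\inf\{1-\mu_R(x,y)\mid y\notin A\}=1$ before verifying $S(x)\subseteq A$ by hand. You instead factor the whole argument through Lemma~\ref{lemma:lemma2}(iii)--(iv) --- i.e.\ through the identifications $A_S=\mathrm{core}(\mu_{[A]_R})$ and $A^S=\mathrm{support}(\mu_{[A]^R})$ --- supplemented by the two one-line reflexivity facts ($x\in A\Rightarrow\mu_{[A]^R}(x)=1$ and $x\notin A\Rightarrow\mu_{[A]_R}(x)=0$, whence $\mu_{[A]_R}\le\mu_{[A]^R}$ everywhere). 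This is more modular: it reuses machinery already established, avoids re-deriving the pointwise computations, and your closing remark correctly isolates the one nontrivial point (that $\mu_{[A]^R}(x)>0$ alone does not give $\mu_{[A]_R}(x)=1$, so the reflexivity fact is genuinely needed to close the converse). It also makes transparent, as a byproduct, that an exact fuzzy rough set is the characteristic function of $A=A_S=A^S$. Neither your argument nor the paper's uses the dually well-ordered spectrum here, since parts (iii) and (iv) of Lemma~\ref{lemma:lemma2} do not require it. The paper's version, for its part, is self-contained and exhibits the explicit values of the membership functions, which is what the subsequent example illustrates.
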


\begin{proof}
Suppose that $x \in A_S = A^S = A$. This means that
\[ 
\mu_{[A]^R}(x) = sup\{ \mu_R(x, y) \mid y \in A \} = \mu_R(x, x) = 1,
\]

\noindent since $R$ is reflexive.

\noindent Now let us examine the lower approximation. If $y \notin A$, then $y \notin S(x)$ either, because $S(x) \subseteq A$. Since $y \notin S(x)$, according to the definition of $S$, it follows that $\mu_R(x, y) = 0$. This is true for every $y \notin A$, yielding
\[ \mu_{[A]_R}(x) = \textrm{inf}\{ 1 - \mu_R(x, y) \mid y \notin A \} = 1. \]

\noindent So we obtain in this case that $\mu_{[A]_R}(x) = \mu_{[A]^R}(x)=1$.

\noindent Now, let $x \notin A_S = A^S = A$. Then $\mu_R(x, y) = 0$ for each $y \in A$, and we have
\[
\mu_{[A]^R}(x) = \textrm{sup}\{ \mu_R(x, y) \mid y \in A \} = 0,  \textrm{and}
\]
\[
\mu_{[A]_R}(x) = \textrm{inf}\{ 1 - \mu_R(x, y) \mid y \notin A \} = 1 - \mu_R(x, x) = 0.
\]

\noindent Hence in this case we obtain $\mu_{[A]_R}(x) = \mu_{[A]^R}(x)=0$.

\noindent Therefore, we proved that $A_S = A^S$ yields $\  \mu_{[A]_R}(x) = \mu_{[A]^R}(x)$, for all $x \in U$.

\smallskip

\noindent Conversely, assume that $\mu_{[A]_R}(x) = \mu_{[A]^R}(x)$, for all $x \in U$. Let $x \in A$ be arbitrary. Then
\[
\textrm{sup}\{ \mu_R(x, y) \mid y \in A \} = \textrm{inf}\{ 1 - \mu_R(x, y) \mid y \notin A \} = 1,
\]

\noindent because $\mu_R(x, x) = 1$ and $\mu_R(x, y) \le 1$ for all $y \in A$. We conclude $\mu_R(x, y) = 0$, for all $y \notin A$, otherwise the infimum on the right side would be strictly less than $1$.

\noindent Assume $(x, y) \in S$, i.e. $y \in S(x)$. Then, $\mu_R(x, y) > 0$ by the definition of $S$, so $y \notin A$ is not possible. Thus, we get $y \in A$ and this implies $x \in A_S$. Hence, $A = A_S$. Then $A^S = (A_S)^S \subseteq A$ implies $A^S = A = A_S$.
\end{proof}

\begin{example}

Let $U = \{a, b, c, d\}$ and let $R$ be a fuzzy relation given by Table \ref{tab:exact}. Table \ref{tab:approximation_2} shows the four sets for which this relation yields exact sets as lower and upper approximations for $R$ and for $S$ (the support of $R$).
\end{example}

\begin{table}[H]
    \centering
    \begin{tabular}{|P{0.5cm}|P{0.5cm}|P{0.5cm}|P{0.5cm}|P{0.5cm}|}
        \hline
         $R$ & $a$ & $b$ & $c$ & $d$  \\ \hline
         $a$ & 1 & 1 & 0.3 & 0 \\ \hline
         $b$ & 1 & 1 & 0.3 & 0 \\ \hline
         $c$ & 0.3 & 0.3 & 1 & 0 \\ \hline
         $d$ & 0 & 0 & 0 & 1 \\ \hline
    \end{tabular}
    \caption{An example fuzzy equivalence relation $R$.}
    \label{tab:exact}
\end{table}

\begin{table}[H]
\centering
\begin{tabular}{|c|c|c|}
\hline
$A$ & $A_S = A^S$ & $\mu_{[A]_R} = \mu_{[A]^R}$ \\
\hline
$\emptyset$ & $\emptyset$ & $\{(a, 0), (b, 0), (c, 0), (d, 0)\}$ \\
\hline
$\{d\}$ & $\{d\}$ & $\{(a, 0), (b, 0), (c, 0), (d, 1)\}$ \\
\hline
$\{a, b, c\}$ & $\{a, b, c\}$ & $\{(a, 1), (b, 1), (c, 1), (d, 0)\}$ \\
\hline
$\{a, b, c, d\}$ & $\{a, b, c, d\}$ & $\{(a, 1), (b, 1), (c, 1), (d, 1)\}$ \\
\hline
\end{tabular}
\bigskip
\caption{Exact fuzzy sets of relation $R$ from Table \ref{tab:exact}.}
\label{tab:approximation_2}
\end{table}

It can be verified that the containment relationship between a base set $A$ and its fuzzy rough approximations is similar to the containment relationship between the base set and its crisp rough approximations, namely:
\[
\text{core}(\mu_{[A]_R}) \subseteq \text{support}(\mu_{[A]_R}) \subseteq A,
\]
\[
A \subseteq \text{core}(\mu_{[A]^R}) \subseteq \text{support}(\mu_{[A]^R}).
\]

\begin{remark}

An important simple case should also be discussed: when we have imperfect information and we are uncertain about setting up the relation. In this simple case, the relationship between two elements can have three possibilities: the elements are certainly related; the elements are certainly not related; the elements might be related, but we are uncertain. We model this with a fuzzy relation $R$, for which
\[
\mu_R(x, y) = \begin{cases}
      1, & \text{if}\ x\ \text{and}\ y\ \text{are certainly related} \\
      0, & \text{if}\ x\ \text{and}\ y\ \text{are certainly not related} \\
      \tfrac{1}{2}, & \text{if}\ x\ \text{and}\ y\ \text{might be related, but we are uncertain}
    \end{cases}.
\]

\medskip

It can be easily checked that for a crisp set $A \subseteq U$, the membership functions of the lower and upper approximations can be given as follows:

\medskip
\begin{enumerate}
\item[(i)]
$\mu_{[A]_R}(x) = \begin{cases}
      0, & \text{if}\ x \notin A_E \\
      \tfrac{1}{2}, & \text{if}\ x \in A_E \setminus A_S \\
      1, & \text{if}\ x \in A_S
    \end{cases},$
\item[(ii)]
$\mu_{[A]^R}(x) = \begin{cases}
      0, & \text{if}\ x \notin A^S \\
      \tfrac{1}{2}, & \text{if}\ x \in A^S \setminus A^E \\
      1, & \text{if}\ x \in A^E
    \end{cases}.$
\end{enumerate}

\end{remark}

\section{Conclusions and further work}
\label{conclusions}

In this paper, we examined the lattice of fuzzy rough sets corresponding to a fuzzy equivalence relation $R$.
We also investigated the relationship between the core/support of the approximations of a fuzzy rough set and the (crisp) approximations corresponding to the core/support of $R$. We have shown that the lattice of fuzzy rough sets is isomorphic to the lattice of rough sets corresponding to $E$, the core of $R$. We also proved that the membership function of an exact fuzzy set (where $A \subseteq U$ is a crisp set and $\mu_{[A]_R}(x) = \mu_{[A]^R}(x)$ for every $x \in U$) is the same as the characteristic function of a (regular) exact set corresponding to $S$, the support of $R$. 

We can extend the investigation of the $E$-based approximation to the $\alpha$-cut $R$-approximation. The related $R_\alpha$ crisp relation for different $\alpha$-levels can be defined in the following way:
\[
R_\alpha = \{ (x, y) \mid \mu_{R}(x, y) \ge \alpha \}.
\]

It is known that $R_\alpha$ is also an equivalence relation whenever $R$ is a fuzzy equivalence. Then we can give the following result:
\[
A^{R_\alpha} = \{ x \in U \mid \mu_{[A]^R}(x) \ge \alpha \},
\]
\[
A_{R_\alpha} = \{ x \in U \mid \mu_{[A]_R}(x) > 1 - \alpha \}.
\]

As a general case, we would like to extend our results using the framework presented in \cite{Cornelis1}, and verifying the lattice-theoretical properties of the generated fuzzy rough sets.

\section*{Acknowledgement}

The authors would like to thank the area editor and the reviewers for their valuable comments and suggestions.

\bibliographystyle{elsarticle-num} 
\bibliography{rough}

\end{document}